\documentclass{article}
\usepackage{amsfonts}
\usepackage{amsmath}

\setcounter{MaxMatrixCols}{10}

\newtheorem{theorem}{Theorem}

\newtheorem{lemma}[theorem]{Lemma}

\newtheorem{proposition}[theorem]{Proposition}
\newtheorem{remark}[theorem]{Remark}

\newenvironment{proof}[1][Proof]{\noindent\textbf{#1.} }{\ \rule{0.5em}{0.5em}}
\input{tcilatex}
\begin{document}

\title{A Linear Division-Based Recursion with Number Theoretic Applications}
\author{Jonathan L. Merzel}
\maketitle

\begin{abstract}
A simple remark on infinite series is presented. \ This applies to a
particular recursion scenario, which in turn has applications related to a
classical theorem on Euler's phi-function and to recent work by Ron Brown on
natural density of square-free numbers.
\end{abstract}

\section{A Basic Fact about Infinite Series}

In a recent paper \cite{Brown}, Ron Brown has computed the natural density
of the set of square-free numbers divisible by $a$ but relatively prime to $%
b $, where $a$ and $b$ are relatively prime square-free integers. \ Here we
note a simple remark on infinite series, one of whose consequences
generalizes a key argument in that work. \ We then derive a consequence of a
well-known result on the Euler $\varphi $-function. \ The "$m=p$" case of
that consequence follows from En-Naoui\cite{En} who anticipates some of our
arguments..

\bigskip

\begin{remark}
Let $\underset{i=1}{\overset{\infty }{\sum }}a_{i}$ be an absolutely
convergent series of complex numbers, and for $i\geq 1$, $f_{i}:%
\mathbb{N}
\cup \{0\}\rightarrow 
\mathbb{C}
$ with $\underset{N\rightarrow \infty }{\lim }f_{i}(N)=D$ (independent of $i$%
) and the $f_{i}$ uniformly bounded. \ Then $\underset{N\rightarrow \infty }{%
\lim }\underset{i=1}{\overset{\infty }{\sum }}a_{i}f_{i}(N)=D\underset{i=1}{%
\overset{\infty }{\dsum }}a_{i}.$
\end{remark}

\begin{proof}
This is a special case of the Lebesgue Dominated Convergence Theorem (using
the counting measure and applied to the sequence $\{a_{i}f_{i}(n)\}_{n=1}^{%
\infty }$). \ To preserve the elementary character of the arguments here, we
give an "Introductory Analysis" proof. \ \newline
Let $\varepsilon >0$ be given. $\ $By uniform boundedness, there is a
constant $B$ for which $\left\vert f_{i}(N)-D\right\vert <B$ for all $i$ and 
$N$. \ Choose $k\in 
\mathbb{N}
$ with$\underset{i=k+1}{\overset{\infty }{\sum }}\left\vert a_{i}\right\vert
<\frac{\varepsilon }{2B}$, and choose M such that for all $N\geq M$ and $%
1\leq i\leq k,~$ $\left\vert f_{i}(N)-D\right\vert <\varepsilon /(1+2%
\underset{j=1}{\overset{k}{\sum }}\left\vert a_{j}\right\vert ).$ $\ $Then
we have for $N\geq M$ 
\begin{eqnarray*}
\left\vert \underset{i=1}{\overset{\infty }{\sum }}a_{i}f_{i}(N)-D\underset{%
i=1}{\overset{\infty }{\dsum }}a_{i}\right\vert &=&\left\vert \underset{i=1}{%
\overset{\infty }{\sum }}a_{i}(f_{i}(N)-D)\right\vert \\
&\leq &\underset{i=1}{\overset{k}{~\sum }}\left\vert a_{i}\right\vert
\left\vert (f_{i}(N)-D)\right\vert +\underset{i=k+1}{\overset{\infty }{~\sum 
}}\left\vert a_{i}\right\vert \left\vert (f_{i}(N)-D)\right\vert \\
&<&\underset{i=1}{\overset{k}{~\sum }}\left\vert a_{i}\right\vert \cdot
\varepsilon /(1+2\underset{i=1}{\overset{k}{\sum }}\left\vert
a_{i}\right\vert )+\frac{\varepsilon }{2B}\cdot B<\varepsilon
\end{eqnarray*}
\end{proof}

\section{A Consequence and Some Applications}

For all applications of the remark above, we first derive the following
consequence involving a "linear division-based"\ recursion.

\begin{lemma}
Let, $F,G:%
\mathbb{N}
\cup \{0\}\rightarrow 
\mathbb{C}
$, $1<m\in 
\mathbb{N}
,~\alpha ,\beta ,D\in 
\mathbb{C}
$ satisfy the conditions (1) $\underset{N\rightarrow \infty }{\lim }F(N)/N=D$%
, (2) $\left\vert \beta \right\vert <m$, (3) $G(N)=\alpha F(\left\lfloor
N/m\right\rfloor )+\beta G(\left\lfloor N/m\right\rfloor )$, and (4) $%
F(0)=G(0)=0$. Then $\underset{N\rightarrow \infty }{\lim }G(N)/N=\frac{%
D\alpha }{m-\beta }.$
\end{lemma}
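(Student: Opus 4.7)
The plan is to iterate the recursion (3) until the inner argument hits $0$. Setting $N_0 = N$ and $N_{j+1} = \lfloor N_j/m \rfloor = \lfloor N/m^{j+1}\rfloor$, since $m>1$ there is some $k$ with $N_k = 0$, and an easy induction on $j$ gives
\[
G(N) = \alpha\sum_{i=0}^{j-1}\beta^{i} F(N_{i+1}) + \beta^{j}G(N_{j}).
\]
Taking $j = k$ and using $F(0)=G(0)=0$ yields the finite expansion
\[
G(N) = \alpha\sum_{i=0}^{\infty}\beta^{i} F\!\left(\lfloor N/m^{i+1}\rfloor\right),
\]
where all but finitely many terms vanish (and those that do vanish contribute $0$ thanks to $F(0)=0$).

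Next I would divide by $N$ and massage each summand into the product form required by the Remark. Set
\[
a_i \;=\; \frac{\alpha\beta^{i}}{m^{i+1}}, \qquad
f_i(N) \;=\; \frac{m^{i+1}}{N}\,F\!\left(\lfloor N/m^{i+1}\rfloor\right),
\]
with $f_i(N):=0$ when $\lfloor N/m^{i+1}\rfloor = 0$. Then $G(N)/N = \sum_{i=0}^{\infty} a_i f_i(N)$. Condition (2) gives $|\beta/m|<1$, so $\sum |a_i|$ is a convergent geometric series (with sum $\alpha/(m-\beta)$ once the sign is restored). For each fixed $i$, writing
\[
f_i(N) \;=\; \frac{F(\lfloor N/m^{i+1}\rfloor)}{\lfloor N/m^{i+1}\rfloor}\cdot\frac{m^{i+1}\lfloor N/m^{i+1}\rfloor}{N}
\]
exhibits $f_i(N)$ as a product of a factor tending to $D$ (by hypothesis (1), since $\lfloor N/m^{i+1}\rfloor\to\infty$) and a factor tending to $1$; hence $\lim_{N\to\infty} f_i(N) = D$, independent of $i$.

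The crucial input for the Remark is the uniform bound on $\{f_i\}$. Because $F(n)/n\to D$, the sequence $\{F(n)/n\}_{n\ge 1}$ is bounded by some $M$; using the same factorization above, $|f_i(N)|\le M\cdot(m^{i+1}\lfloor N/m^{i+1}\rfloor/N)\le M$, uniformly in $i$ and $N$. With absolute convergence of $\sum a_i$, a common limit $D$, and uniform boundedness all verified, the Remark applies and gives
\[
\lim_{N\to\infty}\frac{G(N)}{N} \;=\; D\sum_{i=0}^{\infty} a_i \;=\; D\cdot\frac{\alpha}{m}\cdot\frac{1}{1-\beta/m} \;=\; \frac{D\alpha}{m-\beta}.
\]

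The main obstacle I anticipate is purely bookkeeping: carefully handling the ``tail'' where $\lfloor N/m^{i+1}\rfloor=0$ so that the infinite sum makes sense and $F(0)=0$ is used correctly, and then choosing the right split into $a_i$ and $f_i(N)$ so that the limit $D$ is genuinely independent of $i$. Once that split is made, hypothesis (2) directly supplies absolute convergence and the uniform bound drops out of the boundedness of $F(n)/n$.
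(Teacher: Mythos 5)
Your proposal is correct and follows essentially the same route as the paper: iterate the recursion down to $0$ using $F(0)=G(0)=0$, rewrite $G(N)/N$ as $\sum a_i f_i(N)$ with $a_i$ a geometric term and $f_i(N)=F(\lfloor N/m^{i}\rfloor)/(N/m^{i})$ (up to an index shift), and invoke the Remark. In fact you are a bit more explicit than the paper in verifying the Remark's hypotheses, particularly the uniform bound on the $f_i$ via the boundedness of $F(n)/n$ and the inequality $m^{i+1}\lfloor N/m^{i+1}\rfloor\leq N$.
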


\begin{proof}
Recursively expand (using condition (3) and $\left\lfloor \left\lfloor
a/b\right\rfloor /c\right\rfloor =\left\lfloor a/(bc)\right\rfloor $ for
positive integers $a,b,c$ ) we have for $N>0$ 
\begin{equation}
G(N)/N=\frac{\alpha }{m}\cdot \frac{F(\left\lfloor N/m\right\rfloor )}{N/m}+%
\frac{\alpha \beta }{m^{2}}\frac{F(\left\lfloor N/m^{2}\right\rfloor )}{%
N/m^{2}}+\cdots +\frac{\alpha \beta ^{j-1}}{m^{j}}\frac{F(\left\lfloor
N/m^{j}\right\rfloor )}{N/m^{j}}+\frac{\alpha \beta ^{j-1}}{m^{j}}\frac{%
G(\left\lfloor N/m^{j}\right\rfloor )}{N/m^{j}}  \tag{*}
\end{equation}%
$.$ \ \ \ By properties (1), (2) and (4), this implies we have 
\begin{equation*}
G(N)/N=\dsum\limits_{i=1}^{\infty }\frac{\alpha \beta ^{i-1}}{m^{i}}\frac{%
F(\left\lfloor N/m^{i}\right\rfloor )}{N/m^{i}}
\end{equation*}

After all, for any fixed N this is actually a finite sum by (4) and the
final term in display (*) above is 0 for large $j$. Now by Lemma 1, taking $%
a_{i}=\frac{\alpha \beta ^{i-1}}{m^{i}}$ and $f_{i}(N)=\frac{F(\left\lfloor
N/m^{i}\right\rfloor )}{N/m^{i}}$, it follows that $\underset{N\rightarrow
\infty }{\lim }G(N)/N=D\dsum\limits_{i=1}^{\infty }\frac{\alpha \beta ^{i-1}%
}{m^{i}}=\frac{D\alpha }{m-\beta }$.
\end{proof}

We can derive some simple applications.\newline

Application 1. \ Let $m$ be an integer greater than 1. \ Call an integer $n$
oddly divisible by $m$ if the largest nonnegative integer $t$ with $m^{t}|n$
is odd. \ Similarly define evenly divisible. (Note that by this definition,
a number not divisible by $m$ is evenly divisible by $m$.) \ Set $F(n)=n$
and $G(n)=$ $\left\vert \left\{ i\in 
\mathbb{N}
:1\leq i\leq n\text{, }i\text{ oddly divisible by }m\right\} \right\vert $.
Since there is a 1-1 correspondence between $\left\{ i\in 
\mathbb{N}
:1\leq i\leq n\text{, }i\text{ oddly divisible by }m\right\} $ and $\left\{
i\in 
\mathbb{N}
:1\leq i\leq \left\lfloor n/m\right\rfloor \text{ and }i\text{ is evenly
divisible by }m\right\} $, we quickly see that $G(n)=F(\left\lfloor
n/m\right\rfloor )-G(\left\lfloor n/m\right\rfloor )$. \ Now apply the Lemma
with $D=\alpha =-\beta =1$ to get $\underset{N\rightarrow \infty }{\lim }%
G(N)/N=\frac{1}{m+1}.$ \ So the natural density of numbers oddly divisible
by $m$ is $\frac{1}{m+1}$. \ (This is also easily arrived at by an
inclusion-exclusion argument.)\newline

Application 2. In Brown\cite{Brown} \ the natural density of the set of
square-free numbers divisible by primes $p_{1},\cdots ,p_{k}$ is shown to be 
$6/\pi ^{2}\tprod\limits_{i=1}^{k}\frac{1}{p_{k}+1}$. \ (In fact, he more
generally computes the density of the set of such numbers also not divisible
by a further set of primes and reduces that problem to this one.) \ Using
that the natural density of the set of square-free numbers is $6/\pi ^{2}$,
the cited result follows directly from [?] Lemma 3, which states that, for a
square-free integer $t$ and a prime $p$ not dividing $t$, if the natural
density of the set of square-free numbers divisible by $t$ is $D$, then the
natural density of the set of square-free numbers divisible by $tp$ is $%
D/(p+1)$. \ To do this (converting to our notation), letting $C$ be the set
of square-free numbers, $F(x)$ $=$ $\left\vert \left\{ r\in C:t|r,r\leq
x\right\} \right\vert $ and $G(x)=\left\vert \left\{ r\in C:pt|r,r\leq
x\right\} \right\vert $ Brown quickly establishes that $F(x/p)=G(x/p)+G(x).$
\ Noting that we can replace arguments here with their greatest integers,
and that all hypotheses are in place, we can apply Lemma 2 with $\alpha
=1,~\beta =-1,~m=p$ to arrive at $\underset{N\rightarrow \infty }{\lim }%
G(N)/N=\frac{D}{p+1}.$

\section{Application to a Classical Theorem on Euler's $\protect\varphi $%
-function}

It is well-known that $\underset{N\rightarrow \infty }{\lim }\left(
\dsum\limits_{n=1}^{N}\frac{\varphi (n)}{n}\right) /N=$ $6/\pi ^{2}$.\ \
(See for example \cite{Erd}.)

\ From this we can derive the following proposition, where we sum only over
multiples of an integer $m$:

\begin{proposition}
\bigskip Let $m$ be a positive integer, and let $p_{1},\cdots ,p_{k}$ the
distinct prime divisors of $m$. \ Then 
\begin{equation*}
\underset{N\rightarrow \infty }{\lim }\left( \dsum\limits_{m|n\leq N}\frac{%
\varphi (n)}{n}\right) /N=\frac{6}{\pi ^{2}m}\prod\limits_{j=1}^{k}\frac{%
p_{j}}{1+p_{j}}
\end{equation*}
\end{proposition}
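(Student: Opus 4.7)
The plan is to prove the proposition by induction on $m$, with Lemma 2 providing the inductive step. The base case $m = 1$ is precisely the classical result cited above (noting that the empty product equals $1$). For the inductive step, I fix $m > 1$, pick any prime $p$ dividing $m$, write $m = m'p$ with $m' = m/p < m$, and set
\[
F(N) = \sum_{m' \mid n \le N} \frac{\varphi(n)}{n}, \qquad G(N) = \sum_{m \mid n \le N} \frac{\varphi(n)}{n}.
\]
The inductive hypothesis then gives $\lim_{N \to \infty} F(N)/N = D$ with $D = \frac{6}{\pi^2 m'} \prod_{q \mid m'} \frac{q}{1+q}$ (product over the prime divisors $q$ of $m'$), while $F(0) = G(0) = 0$ is immediate. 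Lemma 2 will be applied with its parameter ``$m$'' taken to be our prime $p$.

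The substantive work is to verify a recursion $G(N) = \alpha F(\lfloor N/p \rfloor) + \beta G(\lfloor N/p \rfloor)$. Writing $n = pn'$ in the sum defining $G(N)$ converts the constraint $m \mid n$ into $m' \mid n'$ with $n' \le N/p$, and the standard identities
\[
\frac{\varphi(pn')}{pn'} = \begin{cases} \varphi(n')/n' & \text{if } p \mid n', \\ \frac{p-1}{p}\,\varphi(n')/n' & \text{if } p \nmid n' \end{cases}
\]
let me split the sum accordingly. If $p \mid m'$, every $n'$ with $m' \mid n'$ automatically has $p \mid n'$, giving $G(N) = F(\lfloor N/p \rfloor)$, so $\alpha = 1$, $\beta = 0$. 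If $p \nmid m'$, then the $n'$ with $m' \mid n'$ and $p \mid n'$ are exactly those with $m \mid n'$, contributing $G(\lfloor N/p \rfloor)$; a short rearrangement yields
\[
G(N) = \tfrac{p-1}{p}\, F(\lfloor N/p \rfloor) + \tfrac{1}{p}\, G(\lfloor N/p \rfloor),
\]
so $\alpha = (p-1)/p$, $\beta = 1/p$. In both cases $|\beta| < p$, so Lemma 2 applies and delivers $\lim G(N)/N = D\alpha/(p - \beta)$: this equals $D/p$ in the first case (where $m$ and $m'$ have the same distinct prime divisors) and $D/(p+1)$ in the second (where $m$ gains the new prime $p$, supplying the missing factor $p/(p+1)$). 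A brief arithmetic check then confirms agreement with the claimed formula for $m$.

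The main difficulty, though modest, is the case split when setting up the recursion: the coefficients $\alpha$ and $\beta$ genuinely depend on whether $p$ already divides $m'$, so both branches must be carried through to make the induction work for arbitrary $m$.
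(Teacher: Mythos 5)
Your proof is correct, and its engine is the same as the paper's: the recursion $G(N)=\frac{p-1}{p}F(\lfloor N/p\rfloor)+\frac{1}{p}G(\lfloor N/p\rfloor)$ for a prime $p$ not dividing $m'$, fed into Lemma 2 with $\alpha =\frac{p-1}{p}$, $\beta =\frac{1}{p}$, $m=p$, to produce the factor $\frac{1}{p+1}$. Where you differ is in how higher powers of a prime are absorbed. The paper proves a claim passing from $t$ to $tp^{j}$ in a single step: the case $j=1$ uses Lemma 2 exactly as you do, but the case of general $j$ is handled by a separate direct computation (a bijection $i\mapsto p^{j}i$, a split according to whether $p\mid i$, and a limit assembled from the hypothesis together with the $j=1$ case), not by another application of Lemma 2. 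You instead peel off one prime factor of $m$ at a time by induction on $m$, and observe that when the peeled prime $p$ already divides $m'$ the recursion degenerates to $G(N)=F(\lfloor N/p\rfloor)$, i.e.\ $\alpha =1$, $\beta =0$, which immediately yields the factor $\frac{1}{p}$ (here Lemma 2 is overkill, since $F(\lfloor N/p\rfloor )/N=\frac{1}{p}\cdot F(\lfloor N/p\rfloor )/(N/p)\rightarrow D/p$ directly). Your organization is arguably cleaner: it replaces the paper's somewhat laborious general-$j$ computation with a trivial branch of the same recursion, at the cost of having to notice, as you correctly do, that the coefficients genuinely change between the two branches. Both routes land on the same formula.
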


Some numerical evidence:

$N=1000,~m=5.$ \ Here \bigskip $\frac{\tsum\limits_{5|n\leq 1000}\frac{%
\varphi (n)}{n}}{1000}\approx .1016$ \ while $\frac{6}{5\pi ^{2}}\cdot \frac{%
5}{6}\approx .1013$.

$N=100000,~m=200.$ \ Here \bigskip $\frac{\tsum\limits_{200|n\leq 100000}%
\frac{\varphi (n)}{n}}{100000}\approx .001691$, while $\frac{6}{200\pi ^{2}}%
\cdot \frac{2}{3}\cdot \frac{5}{6}\approx .001689$.

$N=10000000,~m=12348.$ \ Here \bigskip $\frac{\tsum\limits_{12348|n\leq
1000000}\frac{\varphi (n)}{n}}{1000000}\approx .00002153$, while $\frac{6}{%
12348\pi ^{2}}\cdot \frac{2}{3}\cdot \frac{3}{4}\cdot \frac{7}{8}\approx
.00002154$.\newline

\begin{proof}
The result will follow inductively from the following \ \newline
Claim : \ Let $p$ be a prime, $k$ a positive integer and $t$ an positive
integer not divisible by $p$. \ Then if $\underset{N\rightarrow \infty }{%
\lim }\left( \dsum\limits_{t|n\leq N}\frac{\varphi (n)}{n}\right) /N=L$, it
follows that \newline
$\underset{N\rightarrow \infty }{\lim }\left( \dsum\limits_{tp^{j}|n\leq N}%
\frac{\varphi (n)}{n}\right) /N=\frac{L}{p^{j-1}(p+1)}.$\newline
To establish the claim, we first handle the case $j=1$. \ We set $%
F(N)=\dsum\limits_{t|n\leq N}\frac{\varphi (n)}{n},~G(N)=\dsum\limits_{pt|n%
\leq N}\frac{\varphi (n)}{n}$. \ We can bijectively correspond the set $A$
of integers divisible by $t$ and less than or equal to $N/p$ with the set $B$
of multiples of $pt$ less than or equal to $N$ by multiplication by $p$. \
We write $A=A_{1}\cup A_{2\text{,}}$, with multiples of $p$ in $A_{1}$ and
nonmultiples of $p$ in $A_{2}$, and note that (from the usual computation of 
$\varphi $ in terms of prime factorization) for $n\in A_{1},\varphi (n)/n=$ $%
\varphi (pn)/(pn)$, while for $n\in A_{2},\varphi (n)/n=\frac{p}{p-1}$ $%
\varphi (pn)/(pn)$. \ So%
\begin{eqnarray*}
G(N) &=&\dsum\limits_{pt|n\leq N}\frac{\varphi (n)}{n}=\dsum\limits_{n\in
A_{1}}\frac{\varphi (np)}{np}+\dsum\limits_{n\in A_{2}}\frac{\varphi (np)}{np%
}=\dsum\limits_{n\in A_{1}}\frac{\varphi (n)}{n}+\frac{p-1}{p}%
\dsum\limits_{n\in A_{2}}\frac{\varphi (n)}{n} \\
&=&\frac{p-1}{p}F(\left\lfloor N/p\right\rfloor )+\frac{1}{p}G(\left\lfloor
N/p\right\rfloor
\end{eqnarray*}%
Applying our lemma with $m=p$, $\alpha =\frac{p-1}{p}$, $\beta =\frac{1}{p}$%
, $D=L$ we get 
\begin{equation*}
\underset{N\rightarrow \infty }{\lim }G(N)/N=\frac{D\alpha }{m-\beta }=\frac{%
L}{p+1}.
\end{equation*}%
Now we can proceed to the general case of the claim. \ We now bijectively
correspond the set $A$ of integers divisible by $t$ and less than or equal
to $N/p^{j}$ with the set $B$ of multiples of $p^{j}t$ less than or equal to 
$N$ by multiplication by $p^{k}$, and similarly $j=1$ case write $%
A=A_{1}\cup A_{2\text{,}}$, with multiples of $p$ in $A_{1}$ and
nonmultiples of $p$ in $A_{2},$ . \ Then%
\begin{eqnarray*}
\dsum\limits_{p^{j}t|n\leq N}\frac{\varphi (n)}{n} &=&\dsum\limits_{t|i\leq
N/p^{j}}\frac{\varphi (p^{j}i)}{p^{j}i}=\dsum\limits_{n\in A_{1}}\frac{%
\varphi (p^{j}i)}{p^{j}i}+\dsum\limits_{n\in A_{2}}\frac{\varphi (p^{j}i)}{%
p^{j}i} \\
&=&\dsum\limits_{n\in A_{1}}\frac{p^{j-1}(p-1)\varphi (i)}{p^{j}i}%
+\dsum\limits_{n\in A_{2}}\frac{\varphi (p^{j}i)}{p^{j}i} \\
&=&\frac{p-1}{p}\dsum\limits_{n\in A_{1}}\frac{\varphi (i)}{i}%
+\dsum\limits_{n\in A_{2}}\frac{\varphi (i)}{i} \\
&=&\frac{p-1}{p}\dsum\limits_{t|n\leq N/p^{j}}\frac{\varphi (i)}{i}+\frac{1}{%
p}\dsum\limits_{pt|i\leq N/p^{j}}\frac{\varphi (i)}{i}
\end{eqnarray*}%
\newline
Dividing through by $N$ we get%
\begin{eqnarray*}
\dsum\limits_{p^{j}t|n\leq N}\frac{\varphi (n)}{n}/N &=&\frac{p-1}{p^{j+1}}%
\frac{\dsum\limits_{t|n\leq N/p^{j}}\frac{\varphi (i)}{i}}{N/p^{j}}+\frac{1}{%
p^{j+1}}\frac{\dsum\limits_{pt|i\leq N/p^{j}}\frac{\varphi (i)}{i}}{N/p^{j}}
\\
&\rightarrow &\frac{L(p-1)}{p^{j+1}}+\frac{L}{p^{j+1}(p+1)}=\frac{L}{%
p^{j-1}(p+1)}\text{ as }N\rightarrow \infty
\end{eqnarray*}%
where the first limit of the first term is given by the\ hypothesis $%
\underset{N\rightarrow \infty }{\lim }\left( \dsum\limits_{t|n\leq N}\frac{%
\varphi (n)}{n}\right) /N=L$ and the limit of the second term follows from
the $j=1$ case above. \ That concludes the proof of the claim, and hence the
proposition.
\end{proof}

\end{document}